\newcommand{\ip}[2]{\left\langle {#1}, {#2} \right\rangle}
\newcommand{\di}{\partial}
\newcommand{\lp}{\Delta}
\newcommand{\vx}[1]{{\bf {#1}}}
\newcommand{\dx}[2]{d{#1}^{#2}}
\newcommand{\ds}{\displaystyle}
\newtheorem{defi}{Definition}[section]
\newtheorem{ex}[defi]{Example}
\newtheorem{remar}[defi]{Remark}
\newenvironment{proof}{\noindent {\bf Proof:}}{\hfill{$\blacksquare$} \newline}
\title{Heat Kernel for Open Manifolds}
\author{Trevor H. Jones}
\date{\today}
\newtheorem{theorem}{Theorem}
\newtheorem{lemma}[theorem]{Lemma}
\newtheorem{corollary}[theorem]{Corollary}
\begin{document}
	\maketitle
	\begin{abstract}
		In a 1991 paper by Buttig and Eichhorn, the existence and uniqueness of a differential forms heat kernel on open manifolds of bounded geometry was proven.  In that paper, it was shown that the heat kernel obeyed certain properties, one of which was a relationship between the derivative of heat kernel of different degrees.  We will give a proof of this condition for complete manifolds with Ricci curvature bounded below, and then use it to give an integral representation of the heat kernel of degree $k$.
	\end{abstract}

		In this paper we are considering the differential forms heat equation on manifolds, in particular we are considering $(\lp + \di_t) \omega = 0$ with Dirchlet initial conditions.  Our goal is to produce a formula for the Green's function, also known as the heat kernel or fundamental solution, which gives the solution of this equation.

		The solutions of this equation in the case of functions, or $0$-forms, is well-known.  The work on differential forms has been much more recent.  In 1983, Dodziuk \cite{mpfpiathfoom} proved that for complete oriented $C^\infty$ Riemannian manifolds with Ricci curvature bounded below, bounded solutions are uniquely determined by their initial values.  In 1991 Buttig and Eichhorn \cite{thkfpomobg} gave existence and uniqueness results for the differential forms heat kernel on open manifolds of bounded geometry.  In the same paper they also gave an identity relating derivatives of the heat kernel of differing degrees.  Using that identity, the author, in his doctoral thesis, \cite{thkonrs}, was able to give a formula for the $1$-form heat kernel on open Riemann surfaces of bounded geometry.  This formula,
		\begin{equation} \label{eq:thjphd}
			K_1(\vx{x}, \vx{y} , t) = \left(I + *_\vx{x} *_\vx{y} \right) d_\vx{x} d_\vx{y} \int_t^\infty \!\!\! K_0(\vx{x}, \vx{y}, \tau) \dx{\tau}{},
		\end{equation}
		directly relates the $1$-form heat kernel to the $0$-form heat kernel, about which more is known.

		One of the properties given by Buttig and Eichhorn for a global heat kernel was that the heat kernels $K_k(\vx{x}, \vx{y}, t)$ and $K_{k+1}(\vx{x}, \vx{y}, t)$ are related by
			\begin{equation} \label{defitem:b_and_e_cond}
				d_\vx{x} K_k(\vx{x}, \vx{y}, t) = d^*_\vx{y} K_{k+1}(\vx{x}, \vx{y}, t).
			\end{equation}
		In this article, we will present an alternate proof of this property for manifolds with Ricci curvature bounded below, and then use this to give a formula for the $k$-form heat kernel.

		\begin{lemma} \label{lem:b_and_e}
			For a complete manifold $M$, with Ricci curvature bounded from below, we have the following relationship between the $k$- and $(k+1)$-form heat kernels:
			\begin{enumerate}
				\item $d_\vx{x} K_k(\vx{x}, \vx{y}, t) = d^*_\vx{y} K_{k+1}(\vx{x}, \vx{y}, t)$ \label{lemitem:b_and_e}
				\item $d_\vx{y} K_k(\vx{x}, \vx{y}, t) = d^*_\vx{x} K_{k+1}(\vx{x}, \vx{y}, t)$ \label{lemitem:b_and_e_sym}
			\end{enumerate}
		\end{lemma}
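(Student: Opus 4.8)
The plan is to obtain both identities from the single structural fact that the Hodge Laplacian commutes with $d$ and with $d^*$, so that the heat semigroup $e^{-t\lp}$ inherits the same commutations, and then to transcribe the resulting operator identities into statements about the kernels $K_k$. The commutation I need is that, for $\omega \in C^\infty_c(\Omega^k)$,
$$ d\bigl(e^{-t\lp_k}\omega\bigr) = e^{-t\lp_{k+1}}(d\omega), \qquad d^*\bigl(e^{-t\lp_{k+1}}\eta\bigr) = e^{-t\lp_k}(d^*\eta) \ \ \text{for } \eta \in C^\infty_c(\Omega^{k+1}). $$
Each can be seen in two ways: via the functional calculus, since on a complete manifold the Hodge Laplacian is essentially self-adjoint on compactly supported smooth forms and $[\lp, d] = [\lp, d^*] = 0$ there; or, more in the spirit of this article, by observing that $d(e^{-t\lp_k}\omega)$ and $e^{-t\lp_{k+1}}(d\omega)$ both solve the $(k+1)$-form heat equation $(\lp + \di_t)\eta = 0$ with the common initial datum $d\omega$, are bounded (this is where a lower Ricci bound enters, through the heat kernel and its gradient estimates), and hence agree by Dodziuk's uniqueness theorem for bounded solutions \cite{mpfpiathfoom}.

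To prove item \ref{lemitem:b_and_e}, fix $\omega \in C^\infty_c(\Omega^k)$ and expand both sides of $d(e^{-t\lp_k}\omega) = e^{-t\lp_{k+1}}(d\omega)$ through the integral kernels:
$$ d_\vx{x}\!\int_M K_k(\vx{x}, \vx{y}, t)\wedge *_\vx{y}\,\omega(\vx{y}) \;=\; \int_M K_{k+1}(\vx{x}, \vx{y}, t)\wedge *_\vx{y}\, d_\vx{y}\omega(\vx{y}). $$
On the left, parabolic regularity makes $K_k$ smooth and, under the Ricci lower bound, $K_k$ and $d_\vx{x}K_k$ obey Gaussian estimates in $\vx{y}$, so I may bring $d_\vx{x}$ under the integral. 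On the right I integrate by parts in $\vx{y}$; as $\omega$ is compactly supported there is no boundary term, and $d_\vx{y}$ moves onto $K_{k+1}$ as $d^*_\vx{y}$. Hence $\int_M (d_\vx{x}K_k)\wedge *_\vx{y}\,\omega = \int_M (d^*_\vx{y}K_{k+1})\wedge *_\vx{y}\,\omega$ for every test form $\omega$, which forces $d_\vx{x}K_k(\vx{x}, \vx{y}, t) = d^*_\vx{y}K_{k+1}(\vx{x}, \vx{y}, t)$; the two sides indeed have matching bidegree $(k{+}1, k)$.

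Item \ref{lemitem:b_and_e_sym} follows from the identical computation applied to $d^*(e^{-t\lp_{k+1}}\eta) = e^{-t\lp_k}(d^*\eta)$ for $\eta \in C^\infty_c(\Omega^{k+1})$: expanding in kernels, carrying $d^*_\vx{x}$ under the integral on the left and integrating by parts in $\vx{y}$ on the right gives $d^*_\vx{x}K_{k+1}(\vx{x}, \vx{y}, t) = d_\vx{y}K_k(\vx{x}, \vx{y}, t)$, of bidegree $(k, k{+}1)$. Equivalently, item \ref{lemitem:b_and_e_sym} is item \ref{lemitem:b_and_e} read off under the interchange of the two spatial arguments of the kernels, which is the kernel form of the self-adjointness of $e^{-t\lp}$.

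The step I expect to be the main obstacle is the justification of the semigroup commutation on the noncompact manifold. Through the functional calculus it amounts to checking that $d$ maps $\mathrm{Dom}(\lp_k)$ into $\mathrm{Dom}(\lp_{k+1})$ and intertwines the self-adjoint extensions, so that $[\lp, d] = 0$ on $C^\infty_c$ propagates to $e^{-t\lp}$ — something that rests on completeness (Gaffney-type essential self-adjointness of the Hodge Laplacian). The alternative, via Dodziuk's maximum principle, trades this for the requirement that $e^{-t\lp_k}\omega$ and its exterior derivative be bounded for compactly supported $\omega$, which is exactly what the lower Ricci bound supplies through the heat kernel and gradient bounds. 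A secondary, purely technical, point is the decay of $K_k$ and $d_\vx{x}K_k$ in $\vx{y}$ used to differentiate under the integral sign; restricting to compactly supported test forms keeps this harmless.
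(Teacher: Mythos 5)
Your argument is correct, and it reorganizes the proof in a way that differs from the paper's. The paper works directly at the kernel level: it sets $E(\vx{x},\vx{y},t) = d_\vx{x} K_k - d^*_\vx{y} K_{k+1}$, checks that this double form satisfies the heat equation in $\vx{x}$ (using $[\lp,d]=[\lp,d^*]=0$), shows that $\ip{E}{\omega}\to 0$ as $t\to 0^+$ for every test form $\omega$ by pushing $d_\vx{x}$ and $d^*_\vx{y}$ onto $\omega$ and invoking the delta initial condition of each kernel, and then concludes $E\equiv 0$ from Dodziuk's uniqueness theorem. You instead prove the semigroup intertwining $d\,e^{-t\lp_k} = e^{-t\lp_{k+1}}d$ (and its $d^*$ counterpart) on compactly supported forms first --- either by functional calculus or by applying the same uniqueness theorem to the two bona fide solutions with common initial datum $d\omega$ --- and only afterwards translate the operator identity into the kernel identity by duality. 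The ingredients (commutation of $d$, $d^*$ with $\lp$, plus uniqueness of bounded solutions) are identical, but the placement of the uniqueness step is genuinely different, and your version is arguably cleaner on one point: Dodziuk's theorem is applied to honest bounded $(k{+}1)$-form solutions rather than to the double form $E$, whose boundedness and weak-only vanishing at $t=0$ the paper does not address. The price you pay is the need for gradient (Gaussian-type) bounds on the kernel to guarantee that $d(e^{-t\lp_k}\omega)$ is bounded and that differentiation under the integral is legitimate; you flag this honestly, and it is no worse than the unaddressed boundedness of $E$ in the paper's own argument. Your observation that item \ref{lemitem:b_and_e_sym} is item \ref{lemitem:b_and_e} under interchange of the spatial arguments, via the symmetry of the kernel coming from self-adjointness, is a valid shortcut the paper does not exploit (it simply says the second assertion ``follows in a similar manner'').
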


		\begin{proof}
			Let $E(\vx{x}, \vx{y}, t) = d_\vx{x} K_k(\vx{x},\vx{y},t) - d_\vx{y}^* K_{k+1}(\vx{x},\vx{y},t).$  We will demonstrate that $E$ satisfies the heat equation with zero as the initial condition.  This will imply, by uniqueness of the solutions of the heat equation, see \cite{mpfpiathfoom}, that $E \equiv 0$, giving the desired result.

			First
			\begin{align*}
				\lp_\vx{x} E & = \lp_\vx{x} d_\vx{x} K_k(\vx{x},\vx{y},t) - \lp_\vx{x} d_\vx{y}^* K_{k+1}(\vx{x},\vx{y},t) \\
				& = d_\vx{x} \lp_\vx{x} K_k(\vx{x},\vx{y},t) - d_\vx{y}^* \lp_\vx{x} K_{k+1}(\vx{x},\vx{y},t) \\
				& = d_\vx{x} (-\di_t) K_k(\vx{x},\vx{y},t) - d_\vx{y}^* (-\di_t) K_{k+1}(\vx{x},\vx{y},t) \\
				& = -\di_t E.
			\end{align*}

			Next consider $W := \ip{E}{\omega(\vx{x})}$, where $\omega$ is a suitable test function and $\ip{\mu}{\nu} = \int_M \mu \wedge * \nu$.  Then
			\begin{align*}
				\lim_{t \rightarrow 0+} W & = \lim_{t \rightarrow 0+} \ip{d_\vx{x} K_k}{\omega(\vx{x})} - \ip{d_\vx{y}^* K_{k+1}}{\omega(\vx{x})} \\
				& = \lim_{t \rightarrow 0+} \ip{K_k}{d_\vx{x}^* \omega(\vx{x})} - d_\vx{y}^* \ip{K_{k+1}}{\omega(\vx{x})} \\
				& = d_\vx{y}^* \omega(\vx{y}) - d_\vx{y}^* \omega(\vx{y}) = 0
			\end{align*}

			Since $\omega$ was an arbitrary test function, we must have that $E \equiv 0$ at $t=0$.  Thus by uniqueness, $E \equiv 0$ for all $t>0$.

			The proof of the second assertion follows in a similar manner.
		\end{proof}

		We will use this result to give an explicit formula for $K_k$ in terms of $K_{k \pm 1}$.

		\begin{theorem} \label{thm:K_k}
			Let $M$ be an open, complete manifold with Ricci curvature bounded below.  Then the differential forms heat kernel obey the following relation:
				$$K_k(\vx{x},\vx{y}, t) = d_\vx{x} d_\vx{y} \int_t^\infty K_{k-1}(\vx{x},\vx{y}, \tau) \dx{\tau}{} + d^*_\vx{x} d^*_\vx{y} \int_t^\infty K_{k+1}(\vx{x},\vx{y}, \tau) \dx{\tau}{}.$$
		\end{theorem}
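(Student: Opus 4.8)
The plan is to verify directly that the right-hand side, call it $F_k(\vx{x}, \vx{y}, t)$, satisfies the heat equation $(\lp_\vx{x} + \di_t) F_k = 0$ with the correct initial condition $F_k(\vx{x}, \vx{y}, 0+) = \delta$-distribution acting as the identity on $k$-forms, and then invoke the uniqueness of the heat kernel on complete manifolds with Ricci curvature bounded below (Dodziuk \cite{mpfpiathfoom}) to conclude $F_k = K_k$. So the first step is to set
$$F_k(\vx{x}, \vx{y}, t) = d_\vx{x} d_\vx{y} \int_t^\infty K_{k-1}(\vx{x}, \vx{y}, \tau)\, \dx{\tau}{} + d^*_\vx{x} d^*_\vx{y} \int_t^\infty K_{k+1}(\vx{x}, \vx{y}, \tau)\, \dx{\tau}{}$$
and compute. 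Differentiating in $t$ is immediate: $\di_t \int_t^\infty K_{k\pm 1}\, d\tau = -K_{k\pm 1}(\vx{x}, \vx{y}, t)$, so $\di_t F_k = -d_\vx{x} d_\vx{y} K_{k-1} - d^*_\vx{x} d^*_\vx{y} K_{k+1}$.

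The heart of the computation is applying $\lp_\vx{x} = d_\vx{x} d^*_\vx{x} + d^*_\vx{x} d_\vx{x}$ to $F_k$. Using that $d_\vx{x}^2 = 0$ and $(d^*_\vx{x})^2 = 0$, the Laplacian passes through and collapses: $\lp_\vx{x} d_\vx{x} d_\vx{y} (\cdot) = d^*_\vx{x} d_\vx{x} d_\vx{x} d_\vx{y}(\cdot) + d_\vx{x} d^*_\vx{x} d_\vx{x} d_\vx{y}(\cdot)$, and the cross-terms between the two summands must be handled by commuting derivatives in $\vx{x}$ past derivatives in $\vx{y}$ and then applying Lemma \ref{lem:b_and_e}. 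The key identities I expect to need, inside the time integral, are $d^*_\vx{x} K_k = d_\vx{y} K_{k-1}$ (part \ref{lemitem:b_and_e_sym} of the Lemma, with $k-1$ in place of $k$) and $d^*_\vx{x} K_{k+1} = d_\vx{y} K_k$, together with their $d^*_\vx{y}$ counterparts; these let one rewrite a term like $d_\vx{x} d^*_\vx{x} d_\vx{x} d_\vx{y} \int K_{k-1}$ as something expressible via $\int K_k$ or $\int K_{k+1}$. After the dust settles, the surviving terms should reassemble, via $\lp_\vx{x} \int_t^\infty K_{k\pm1}\, d\tau = -\di_t \int_t^\infty K_{k\pm1}\, d\tau + (\text{boundary term at }\tau=\infty)$ and the fact that $K_{k\pm1}(\vx{x},\vx{y},\tau) \to 0$ as $\tau \to \infty$ (heat flow decay on the relevant spectral subspace), into exactly $-\di_t F_k$, establishing that $F_k$ solves the heat equation.

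For the initial condition, I would pair $F_k(\cdot, \vx{y}, t)$ against an arbitrary test $k$-form $\omega$ and take $t \to 0+$, using $\ip{d_\vx{x} d_\vx{y} \int_t^\infty K_{k-1}\, d\tau}{\omega(\vx{x})} = d_\vx{y} \int_t^\infty \ip{K_{k-1}}{d^*_\vx{x}\omega}\, d\tau$ and similarly for the other term, then applying the semigroup identity $\int_0^\infty \ip{K_{k-1}(\cdot,\vx{y},\tau)}{d^*_\vx{x}\omega}\, d\tau$-type Hodge-decomposition bookkeeping. The cleanest route is: as $t \to 0+$, $\int_t^\infty K_{k-1}(\vx{x},\vx{y},\tau)\, d\tau$ applied appropriately reproduces (on the co-exact part) the action of $d_\vx{y}\, G$ and the $K_{k+1}$ term reproduces the exact part, so that $F_k(\cdot,\vx{y},0+)$ acts as the identity on all of $\Omega^k$ by the Hodge decomposition $\omega = d\alpha + d^*\beta + h$; one must also check the harmonic part is handled, which on an open manifold with the relevant decay is where care is needed. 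The main obstacle I anticipate is precisely this last point — justifying the interchange of $\lp_\vx{x}$ (or $\di_t$) with the improper $\tau$-integral and controlling the behavior at $\tau = \infty$, i.e. showing the boundary contributions vanish; on a complete manifold of bounded geometry this follows from $L^2$-decay estimates for the heat semigroup on forms, but it requires invoking those estimates explicitly rather than treating the manipulations formally.
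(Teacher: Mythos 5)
Your strategy — build the right-hand side as a candidate $F_k$, verify it solves the heat equation with delta initial data, and invoke uniqueness — is a genuinely different and much heavier route than the one the paper takes. The paper's proof runs the logic in the opposite direction: it starts from the known kernel and writes
$K_k(\vx{x},\vx{y},t) = -\int_t^\infty \di_\tau K_k(\vx{x},\vx{y},\tau)\,\dx{\tau}{}$
(using only the pointwise decay $K_k \to 0$ as $\tau \to \infty$), replaces $\di_\tau K_k$ by $-(d_\vx{x} d^*_\vx{x} + d^*_\vx{x} d_\vx{x})K_k$ via the heat equation, and then applies Lemma \ref{lem:b_and_e} once to convert $d_\vx{x} d^*_\vx{x} K_k$ into $d_\vx{x} d_\vx{y} K_{k-1}$ and $d^*_\vx{x} d_\vx{x} K_k$ into $d^*_\vx{x} d^*_\vx{y} K_{k+1}$. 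That derivation never has to confront the question you correctly identify as the hard part of your plan: showing that $F_k(\cdot,\vx{y},0+)$ acts as the identity on $k$-forms. In your route this forces a Hodge-decomposition argument, and on an open manifold the harmonic component is genuinely problematic — your two summands only see the exact and co-exact parts of a test form, so without additional hypotheses (e.g.\ no $L^2$ harmonic $k$-forms, or a spectral gap) the initial-condition verification does not close; this is a real gap in the proposal as written, not merely a technicality. The interchange-of-limits and $\tau=\infty$ boundary issues you flag are also avoided in the direct derivation, which only needs the single decay statement $K_k \to 0$ before any differentiation under the integral. In short: your approach would, if completed, reprove the theorem together with a uniqueness characterization of the right-hand side, but the paper's three-line computation gets the identity with far less machinery, and you should be aware that the direct route exists and that your route's initial-condition step is where it would most likely founder.
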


		\begin{proof}
			Let $K_k$ be the $k$-form heat kernel.  Clearly,
				$$\ds K_k(\vx{x}, \vx{y}, t) = -\int_t^\infty \frac{\di}{\di \tau} K_k(\vx{x}, \vx{y}, \tau) \dx{\tau}{},$$
			since $K_k$ tends to zero (pointwise) as $t$ increases.  Since $K_k$ is a solution of the heat equation, we can replace the time derivative with $-\lp_\vx{x} = - d_\vx{x} d^*_\vx{x} - d^*_\vx{x}d_\vx{x}$, so
				$$K_k(\vx{x}, \vx{y}, t) = \int_t^\infty (d_\vx{x} d^*_\vx{x} + d^*_\vx{x}d_\vx{x}) K_k(\vx{x}, \vx{y}, \tau) \dx{\tau}{}.$$
			Using Lemma \ref{lem:b_and_e}, we can rewrite the above as
				$$K_k(\vx{x}, \vx{y}, t) = \int_t^\infty d_\vx{x} d_\vx{y}K_{k-1}(\vx{x}, \vx{y}, \tau) + d^*_\vx{x} d^*_\vx{y} K_{k+1}(\vx{x}, \vx{y}, \tau) \dx{\tau}{}.$$
		\end{proof}

		The result in Theorem \ref{thm:K_k} depends mainly on two things: the existence and uniqueness of the heat kernel, and the pointwise convergence to zero of the kernel for large time.  The methods used above work for a diffusion-type equation provided these conditions are met.  For example, for the diffusion equation $(\lp + c \di_t) \omega = 0$, the proofs follow through almost identically.

		\begin{corollary} \label{cor:scale_time}
			Let $M$ be an open, $n$-dimensional, differentiable manifold, with Ricci curvature bounded below, and consider the differential form diffusion equation $(\lp + c \di_t) \omega = 0$ with initial data $\omega(\vx{x},0) = f(\vx{x})$.  Then the Green's functions are related by
			$$G_k(\vx{x},\vx{y},t) = d_\vx{x} d_\vx{y} \int_{ct}^\infty G_{k-1}(\vx{x},\vx{y}, \tau) \dx{\tau}{} + d^*_\vx{x} d^*_\vx{y} \int_{ct}^\infty G_{k+1}(\vx{x},\vx{y}, \tau) \dx{\tau}{}.$$
		\end{corollary}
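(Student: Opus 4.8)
The plan is to reduce the diffusion equation $(\lp+c\di_t)\omega=0$ (with $c>0$, the case in which the initial value problem is well posed) to the heat equation already handled in Theorem~\ref{thm:K_k} by a rescaling of the time variable, then apply that theorem and change variables in the resulting integral. First I would guess the reduction: set $G_k(\vx{x},\vx{y},t):=K_k(\vx{x},\vx{y},t/c)$ and verify that it is the Green's function of the diffusion equation. Since $\lp_\vx{x}$ acts only on the spatial slot,
$$\lp_\vx{x}G_k(\vx{x},\vx{y},t)=(\lp_\vx{x}K_k)(\vx{x},\vx{y},t/c)=-(\di_s K_k)(\vx{x},\vx{y},t/c)=-c\,\di_t G_k(\vx{x},\vx{y},t),$$
so $(\lp_\vx{x}+c\di_t)G_k=0$; and as $t\to0^+$ the argument $t/c\to0^+$, so $G_k$ reproduces the initial data $f$ exactly as $K_k$ does. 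Uniqueness of bounded solutions \cite{mpfpiathfoom} then identifies this $G_k$ with the Green's function in the statement.

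Next I would transport Lemma~\ref{lem:b_and_e} to these rescaled kernels. Since $d_\vx{x}$ and $d^*_\vx{x}$ differentiate only in the spatial slot, they commute with plugging $t/c$ into the time slot, so evaluating Lemma~\ref{lem:b_and_e} at time $t/c$ gives $d_\vx{x}G_k=d^*_\vx{y}G_{k+1}$ and $d_\vx{y}G_k=d^*_\vx{x}G_{k+1}$ verbatim for the $G$'s, with no new constant. Equivalently, as the paragraph preceding the corollary suggests, one rerun of the proof of Lemma~\ref{lem:b_and_e} suffices: the form $E=d_\vx{x}G_k-d^*_\vx{y}G_{k+1}$ again satisfies $(\lp_\vx{x}+c\di_t)E=0$, because the constant $c$ multiplies $\di_t$ on both sides of the computation $\lp_\vx{x}E=\cdots=-c\,\di_t E$, and its initial data still vanishes, the test-function argument being insensitive to $c$.

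Then I would rerun the proof of Theorem~\ref{thm:K_k} with $K$ replaced by $G$. Because $G_k(\vx{x},\vx{y},\tau)\to0$ pointwise as $\tau\to\infty$, we have $G_k(\vx{x},\vx{y},t)=-\int_t^\infty\di_\tau G_k(\vx{x},\vx{y},\tau)\,\dx{\tau}{}$; the diffusion equation replaces $\di_\tau G_k$ by $-\tfrac1c\lp_\vx{x}G_k=-\tfrac1c(d_\vx{x}d^*_\vx{x}+d^*_\vx{x}d_\vx{x})G_k$; and the transported Lemma rewrites $d^*_\vx{x}G_k=d_\vx{y}G_{k-1}$ and $d_\vx{x}G_k=d^*_\vx{y}G_{k+1}$. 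Pulling the (now constant-coefficient) differential operators outside the integral leaves $G_k(\vx{x},\vx{y},t)=\tfrac1c\,d_\vx{x}d_\vx{y}\int_t^\infty G_{k-1}(\vx{x},\vx{y},\tau)\,\dx{\tau}{}+\tfrac1c\,d^*_\vx{x}d^*_\vx{y}\int_t^\infty G_{k+1}(\vx{x},\vx{y},\tau)\,\dx{\tau}{}$, and a change of variables rescaling the time variable in these two integrals absorbs the $\tfrac1c$ and puts the limits of integration into the displayed form.

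I expect the only real obstacle to be bookkeeping rather than ideas: the constant $c$ enters twice with competing effects — once when $\di_\tau$ is traded for $-\tfrac1c\lp_\vx{x}$ inside the time integral, and once when the variable of integration is rescaled — and one must keep it (and the lower limit, $t$ versus $ct$) straight, in particular applying the rescaling in the correct direction, so that the two contributions combine into exactly the stated identity rather than a scalar multiple of it. No new analytic input is required: existence, uniqueness of bounded solutions, and decay of the kernel as $\tau\to\infty$ are all preserved under rescaling time by a positive constant, so they carry over unchanged from the heat-equation setting of Theorem~\ref{thm:K_k}.
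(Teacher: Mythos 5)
Your reduction $G_k(\vx{x},\vx{y},t)=K_k(\vx{x},\vx{y},t/c)$, the verification that it solves $(\lp+c\di_t)G_k=0$ with the right initial behaviour, the transported Lemma, and your intermediate identity
$$G_k(\vx{x},\vx{y},t)=\tfrac1c\,d_\vx{x}d_\vx{y}\int_t^\infty G_{k-1}(\vx{x},\vx{y},\tau)\,\dx{\tau}{}+\tfrac1c\,d^*_\vx{x}d^*_\vx{y}\int_t^\infty G_{k+1}(\vx{x},\vx{y},\tau)\,\dx{\tau}{}$$
are all correct. The gap is the final sentence: no change of variables turns this into the displayed statement. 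Substituting $\tau=c\sigma$ absorbs the $\tfrac1c$ but replaces the integrand by $G_{k\pm1}(\vx{x},\vx{y},c\sigma)=K_{k\pm1}(\vx{x},\vx{y},\sigma)$ and moves the lower limit to $t/c$, not $ct$; substituting $\tau=\sigma/c$ moves the lower limit to $ct$ but changes the integrand to $G_{k\pm1}(\sigma/c)$ and produces a factor $1/c^2$. Concretely, since $G_{k\pm1}(\tau)=K_{k\pm1}(\tau/c)$, one has $\int_{ct}^\infty G_{k\pm1}(\tau)\,\dx{\tau}{}=c\int_t^\infty K_{k\pm1}(\sigma)\,\dx{\sigma}{}$, so the right-hand side of the stated corollary equals $c\,K_k(\vx{x},\vx{y},t)$ by Theorem~\ref{thm:K_k}, while the left-hand side is $K_k(\vx{x},\vx{y},t/c)$; these agree only when $c=1$. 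The issue you dismissed as ``bookkeeping'' is precisely where the argument cannot be completed, because the stated formula is not equivalent to the correct one.

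In fairness, the paper's own proof is the same rescaling argument and contains the same slip: it sets $T=ct$ and asserts $c\di_t=\di_T$, whereas $T=ct$ gives $c\di_t=c^2\di_T$; the substitution that converts the diffusion equation into the heat equation is $T=t/c$. The corollary should be stated either with your $\tfrac1c$ prefactor and lower limit $t$ (integrands $G_{k\pm1}$), or equivalently with lower limit $t/c$ and integrands $K_{k\pm1}$. Your derivation, stopped one sentence earlier, is the correct statement together with its proof.
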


		\begin{proof}
			Let $T=ct$, then $c \di_t = \di_T$, so the equation becomes $\left( \lp + \di_T \right) \omega(\vx{x},T) = 0$ with the same initial conditions.  So by Theorem \ref{thm:K_k} we have the desired Green's functions.
		\end{proof}

		In the case of 2-dimensional manifolds, the $0$-form and the $2$-form heat kernels are isomorphic, as the following Lemma will show.  This allows use to write the $1$-form heat kernel in terms of the $0$-form, or function, heat kernel.

		\begin{lemma} \label{lem:kssk}
			Let $M$ be a complete manifold with Ricci curvature bounded below.  Then the differential forms heat kernels, $K_k$ and $K_{n-k}$ are related in the following manner:
				$$K_k = *_\vx{x}*_\vx{y} K_{n-k}.$$
		\end{lemma}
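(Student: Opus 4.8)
The plan is to follow the template of the proof of Lemma~\ref{lem:b_and_e}. Set
$$E(\vx{x},\vx{y},t) \;=\; K_k(\vx{x},\vx{y},t) \;-\; *_\vx{x}*_\vx{y} K_{n-k}(\vx{x},\vx{y},t),$$
a $k$-form in $\vx{x}$ (and in $\vx{y}$) lying in the same class of solutions as $K_k$, since $*$ is a pointwise isometry and hence $\abs{*_\vx{x}*_\vx{y}K_{n-k}} = \abs{K_{n-k}}$. I will show that $E$ solves the differential forms heat equation in $\vx{x}$ with zero initial data, so that the uniqueness theorem of \cite{mpfpiathfoom} forces $E\equiv0$, which is precisely the claimed identity.

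For the equation, the one structural fact needed is that the Hodge star intertwines the Hodge Laplacian, $*\,\lp = \lp\,*$, a pointwise identity valid wherever $*$ is defined; together with the trivial observation that $*_\vx{x}$ commutes with $\di_t$ and with every operator acting in the $\vx{y}$ slot (and symmetrically for $*_\vx{y}$), this gives
$$\lp_\vx{x}E \;=\; \lp_\vx{x}K_k - *_\vx{x}*_\vx{y}\lp_\vx{x}K_{n-k} \;=\; -\di_t K_k + *_\vx{x}*_\vx{y}\di_t K_{n-k} \;=\; -\di_t E.$$

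For the initial condition I will pair with an arbitrary compactly supported smooth $k$-form $\omega$, using $\ip{\mu}{\nu} = \int_M \mu\wedge *\nu$ exactly as in Lemma~\ref{lem:b_and_e}. The term $\ip{K_k(\cdot,\vx{y},t)}{\omega}$ tends to $\omega(\vx{y})$; for the other term, pull $*_\vx{y}$ outside the integration in $\vx{x}$ and use that $*$ is an $L^2$-isometry on forms together with $*_\vx{x}*_\vx{x} = (-1)^{k(n-k)}$ on $k$-forms to obtain
$$\ip{*_\vx{x}*_\vx{y}K_{n-k}}{\omega} \;=\; (-1)^{k(n-k)}\, *_\vx{y}\ip{K_{n-k}(\cdot,\vx{y},t)}{*_\vx{x}\omega} \;\xrightarrow[t\to0+]{}\; (-1)^{k(n-k)}\, *_\vx{y}\,(*\omega)(\vx{y}) \;=\; \omega(\vx{y}),$$
where the convergence uses that $*_\vx{x}\omega$ is again a compactly supported smooth $(n-k)$-form and $K_{n-k}$ is the $(n-k)$-form heat kernel, and where the two factors of $(-1)^{k(n-k)}$ cancel at the end via $*_\vx{y}*_\vx{y}=(-1)^{k(n-k)}$. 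Hence $\lim_{t\to0+}\ip{E}{\omega}=0$ for every test form, so $E$ vanishes at $t=0$, and uniqueness completes the proof.

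The only genuine work is the bookkeeping of the $*^2 = (-1)^{k(n-k)}$ signs so that they cancel in the initial-time limit, together with the routine justification for moving $*_\vx{x}$ and $*_\vx{y}$ through the pairing integral and through the limit; there is no analytic obstacle beyond those already invoked for Lemma~\ref{lem:b_and_e}, and the substantive input is the commutation relation $*\,\lp = \lp\,*$.
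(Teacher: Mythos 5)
Your proof is correct: the sign bookkeeping works out, since $\ip{*_\vx{x}\mu}{\omega} = (-1)^{k(n-k)}\ip{\mu}{*_\vx{x}\omega}$ and $*_\vx{y}*_\vx{y} = (-1)^{k(n-k)}$ on $k$-forms indeed cancel, so both terms of $\ip{E}{\omega}$ tend to $\omega(\vx{y})$ and $E$ has vanishing initial data. The route, however, is not the one the paper takes. You transplant the template of Lemma~\ref{lem:b_and_e}: form the difference $E = K_k - *_\vx{x}*_\vx{y}K_{n-k}$, verify it solves the heat equation using $*\lp = \lp\,*$, show its initial value vanishes weakly against test forms, and invoke Dodziuk's uniqueness of solutions. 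The paper instead argues at the level of solutions rather than kernels: starting from arbitrary initial data $f$, it observes that $*_\vx{x}u$ solves the $(n-k)$-form heat equation with data $*_\vx{x}f$ (the same commutation $*\lp = \lp\,*$), represents $*_\vx{x}u$ via $K_{n-k}$, and unwinds the wedge-product signs to get $u = \ip{*_\vx{x}*_\vx{y}K_{n-k}}{f}$, concluding by uniqueness of the heat kernel that $*_\vx{x}*_\vx{y}K_{n-k} = K_k$. Both arguments rest on the same two inputs (commutation of $*$ with $\lp$, and a uniqueness statement) and essentially the same $(-1)^{k(n-k)}$ computation; yours needs the additional (routine, and already accepted in Lemma~\ref{lem:b_and_e}) step that weak vanishing at $t=0$ gives $E\equiv 0$ there, while the paper's version appeals directly to uniqueness of the kernel and never has to treat the double form $E$ as a solution in one variable. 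Either is a legitimate proof.
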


		\begin{proof}
			Consider the equation $(\di_t + \lp_k)u = 0, \; \; u(\vx{x},0) = f(\vx{x})$.  Then $u$ is given by $\ds u(\vx{x},t) = \ip{K_k(\vx{x}, \vx{y},t)}{f(\vx{y})} = \int_M K(\vx{x},\vx{y},t) \wedge_\vx{y} *_\vx{y} f(\vx{y})$.  Since $* \lp_k = \lp_{n-k}*$, it follows that $*_\vx{x} u$ is a solution of the $(n-k)$-form heat equation with initial condition $*_\vx{x} f(\vx{x})$.  So
			\begin{align*}
				*_\vx{x} u(\vx{x},t) & = \ip{K_{n-k}(\vx{x},\vx{y},t)}{*_\vx{y} f(\vx{y})} \\
				& = \ip{*_\vx{y} f(\vx{y})}{K_{n-k}(\vx{x},\vx{y},t)} \\
				& = \int_M *_\vx{y} f(\vx{y}) \wedge_\vx{y} *_\vx{y} K_{n-k}(\vx{x},\vx{y},t)
			\end{align*}
			By applying $*_\vx{x}$ to both sides, and changing order in the wedge product, we have
				$$(-1)^{k(n-k)} u(\vx{x},t) = \int_M (-1)^{k(n-k)} *_\vx{x} *_\vx{y} K_{n-k}(\vx{x}, \vx{y}, t) \wedge_\vx{y} *_\vx{y} f(\vx{y})$$
			or
				$$u(\vx{x},t) = \ip{*_\vx{x} *_\vx{y} K_{n-k}(\vx{x}, \vx{y}, t)}{f(\vx{y})}.$$
			By uniqueness of the heat kernel we have the desired result.
		\end{proof}

		\begin{corollary} \label{cor:K1}
			Let $M$ be an open, complete manifold of dimension 2 with Ricci curvature bounded below.  Then the 1-form heat kernel on $M$ is given by
				$$K_1(\vx{x}, \vx{y}, t) = \left( I + *_\vx{x} *_\vx{y} \right) d_\vx{x} d_\vx{y} \int_t^\infty K_0(\vx{x}, \vx{y}, \tau) \dx{\tau}{}$$
			where, $\vx{x}, \vx{y} \in M$ and $t>0$ and $K_0$ is the 0-form heat kernel.
		\end{corollary}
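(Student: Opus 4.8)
The plan is to specialize Theorem~\ref{thm:K_k} to the case $n=2$, $k=1$, and then use Lemma~\ref{lem:kssk} to collapse the two terms into one. First I would apply Theorem~\ref{thm:K_k} with $k=1$, which gives
$$K_1(\vx{x},\vx{y},t) = d_\vx{x} d_\vx{y} \int_t^\infty K_0(\vx{x},\vx{y},\tau)\dx{\tau}{} + d^*_\vx{x} d^*_\vx{y} \int_t^\infty K_2(\vx{x},\vx{y},\tau)\dx{\tau}{}.$$
The task then reduces to rewriting the second term so that it matches $*_\vx{x}*_\vx{y}$ applied to the first term.

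Next I would invoke Lemma~\ref{lem:kssk} with $n=2$, $k=0$, which yields $K_2 = *_\vx{x}*_\vx{y} K_0$ (equivalently $K_0 = *_\vx{x}*_\vx{y}K_2$, since $*_\vx{x}*_\vx{y}$ is an involution up to a sign that is $+1$ here because $k(n-k)=0$). Substituting this into the second term gives $d^*_\vx{x} d^*_\vx{y} \int_t^\infty *_\vx{x}*_\vx{y} K_0(\vx{x},\vx{y},\tau)\dx{\tau}{}$. Now I would push the Hodge stars past the codifferentials using the standard identity $d^* = \pm * d *$ on an $n$-manifold acting on $p$-forms; in dimension $2$ the signs can be tracked explicitly, and the key point is that $d^*_\vx{x}*_\vx{x} = \pm *_\vx{x} d_\vx{x}$ and similarly in $\vx{y}$, so that $d^*_\vx{x} d^*_\vx{y} *_\vx{x}*_\vx{y} = *_\vx{x}*_\vx{y} d_\vx{x} d_\vx{y}$ after the signs combine. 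This converts the second term into $*_\vx{x}*_\vx{y} d_\vx{x} d_\vx{y} \int_t^\infty K_0(\vx{x},\vx{y},\tau)\dx{\tau}{}$, and factoring out $d_\vx{x}d_\vx{y}\int_t^\infty K_0\,\dx{\tau}{}$ from both terms gives the claimed formula.

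The main obstacle will be the bookkeeping of signs in the identity relating $d^*$ to $*d*$: on $p$-forms over an $n$-manifold one has $d^* = (-1)^{n(p+1)+1} * d *$ (with an extra sign in the non-oriented or indefinite-signature case, but here the metric is Riemannian), and one must apply it twice — once in each variable — while also accounting for the degree shift caused by applying $d$ and $*$. I would verify that, with $n=2$ and the relevant degrees being $0$ and $1$, all these signs cancel so that the operator identity $d^*_\vx{x}d^*_\vx{y}\,*_\vx{x}*_\vx{y} = *_\vx{x}*_\vx{y}\,d_\vx{x}d_\vx{y}$ holds with a plus sign on the nose; a brief explicit computation of the exponents suffices. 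Everything else is a direct substitution, so no analytic subtleties beyond those already handled in Theorem~\ref{thm:K_k} and Lemma~\ref{lem:kssk} arise.
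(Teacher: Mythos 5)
Your proposal is correct and follows essentially the same route as the paper: specialize Theorem~\ref{thm:K_k} to $k=1$, substitute $K_2 = *_\vx{x}*_\vx{y}K_0$ from Lemma~\ref{lem:kssk}, and use $d^* * \omega = -*d\omega$ on $0$-forms in each variable so the two signs cancel. The sign bookkeeping you flag works out exactly as you expect, so nothing further is needed.
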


		\begin{proof}
			Since $M$ has dimension 2, and so $K_2 = *_\vx{x} *_\vx{y} K_0$ by Lemma \ref{lem:kssk}.  Recall that $d^* * \omega = - * d \omega$ for $0$-forms, $\omega$.  This gives the desired result.
		\end{proof}

		As an example, consider the case of the hyperbolic plane, with constant curvature $-1$.  From \cite{eirg} we have the $0$-form heat kernel
			$$K_0(\vx{x},\vx{y},t) = \frac{1}{2 \pi} \int_0^\infty P_{-\frac{1}{2}+ i\rho}(\cosh d_{H^2}(\vx{x},\vx{y})) \rho e^{-(\frac{1}{4} + \rho^2)t} \tanh \pi \rho d \rho,$$
		which, if we perform the integration set out in Corollary \ref{cor:K1}, we get
			\begin{align*}
				{\textstyle K_1(\vx{x}, \vx{y}, t) = \frac{1}{2 \pi} \left( I +  *_\vx{x} *_\vx{y} \right) d_\vx{x} d_\vx{y} } \left[ \int_0^\infty {\scriptstyle P_{-\frac{1}{2}+ i\rho}(\cosh d_{H^2}(\vx{x},\vx{y})) \rho \frac{e^{-(\frac{1}{4} + \rho^2)t}}{\frac{1}{4} + \rho^2} \tanh \pi \rho \dx{\rho}{} } \right].
			\end{align*}

		If $M$ is an open $2$-dimensional manifold which has a unique heat kernel for functions, $K_0$, then Corollary \ref{cor:K1} suggests a candidate for a heat kernel on $1$-forms, and since the $K_0$ and $K_2$ heat kernels are isomorphic, we would know all the heat kernels.  We will now show that $K_1(\vx{x},\vx{y},t)=(I+*_\vx{x}*_\vx{y})d_\vx{x}d_\vx{y} \int_t^\infty K_0(\vx{x},\vx{y},\tau) d\tau$ works as the heat kernel.  So given
			\begin{align}
				(\lp^{(1)}_\vx{x} + \di_t)w_1(\vx{x},t) & = 0 \label{eq:heK1.1} \\
				w_1(\vx{x},0) & = f_1(\vx{x}) \label{eq:heK1.2}
			\end{align}
		show that $w_1$ can be written as $w_1(\vx{x},t) = \ip{K_1(\vx{x},\vx{y},t)}{f_1(\vx{y})}$.

		So, let $w_1$ be a solution of (\ref{eq:heK1.1}) and (\ref{eq:heK1.2}), and $w(\vx{x},t) = \ip{K_1(\vx{x},\vx{y},t)}{f_1(\vx{y})}$, Since the Laplacian commutes with the Hodge star isomorphism and the exterior derivative and coderivative, it is clear that $w$ satisfies equation (\ref{eq:heK1.1}).  Now we just need to show that $w$ as defined, satisfies the initial condition (\ref{eq:heK1.2}).

		\begin{align*}
			w(\vx{x},t) & = \int_t^\infty d_\vx{x} \ip{d_\vx{y} K_0(\vx{x},\vx{y},\tau)}{f_1(\vx{y})} + *_\vx{x} d_\vx{x} \ip{*_\vx{y} d_\vx{y} K_0(\vx{x},\vx{y},\tau)}{f_1(\vx{y})} d\tau \\
			& = \int_t^\infty d_\vx{x} \ip{K_0(\vx{x},\vx{y},\tau)}{d_\vx{y}^* f_1(\vx{y})} - *_\vx{x} d_\vx{x} \ip{K_0(\vx{x},\vx{y},\tau)}{*_\vx{y} d_\vx{y} f_1(\vx{y})} d\tau \\
			& = \int_t^\infty d_\vx{x} d_\vx{x}^* w_1(\vx{x},\tau) - *_\vx{x} d_\vx{x} *_\vx{x} d_\vx{x} w_1(\vx{x},\tau) d\tau \\
			& = \int_t^\infty \lp w_1(\vx{x}, \tau) d\tau \\
			& = \int_t^\infty -\di_\tau w_1(\vx{x}, \tau) d\tau = w_1(\vx{x},t)
		\end{align*}

		Since $w_1$ is a solution of the heat equation with initial value $f_1$, and $w = w_1$, this means that $w$ also has initial value $f_1$.  Thus $w$ is a solution of (\ref{eq:heK1.1}) and (\ref{eq:heK1.2}).

		Finally, let us consider the case of compact complete manifolds.  In this case, because of conservation, diffusion does not tend to zero, so the large-time limit has to be taken into account.

		\begin{theorem}
			Let $M$ be a complete manifold with Ricci curvature bounded below, and that the $\lim_{t \rightarrow \infty} K_k(\vx{x}, \vx{y},t)$ is a constant double-form, call it $C$.  Then, the heat kernel obeys the following relation:
				$$K_k(\vx{x},\vx{y}, t) = C + d_\vx{x} d_\vx{y} \int_t^\infty K_{k-1}(\vx{x},\vx{y}, \tau) \dx{\tau}{} + d^*_\vx{x} d^*_\vx{y} \int_t^\infty K_{k+1}(\vx{x},\vx{y}, \tau) \dx{\tau}{}.$$
		\end{theorem}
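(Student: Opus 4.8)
The plan is to repeat the proof of Theorem~\ref{thm:K_k}, the only change being that the large-time limit of $K_k$ is now the double-form $C$ rather than $0$, so that $C$ survives as an additive term. By the fundamental theorem of calculus on $[t,T]$ in the time variable, followed by $T \to \infty$,
$$K_k(\vx{x},\vx{y},t) = \lim_{T \to \infty} K_k(\vx{x},\vx{y},T) - \int_t^\infty \frac{\di}{\di \tau} K_k(\vx{x},\vx{y},\tau)\, \dx{\tau}{} = C - \int_t^\infty \frac{\di}{\di \tau} K_k(\vx{x},\vx{y},\tau)\, \dx{\tau}{},$$
the improper integral converging because $K_k - C$, and hence $\di_\tau K_k$, decays exponentially in $\tau$ on a compact manifold (spectral gap); this exponential decay is the one analytic input the compactness hypothesis really provides, and is what we use rather than compactness itself.

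As in Theorem~\ref{thm:K_k}, use $(\lp_\vx{x} + \di_\tau) K_k = 0$ to replace $\frac{\di}{\di \tau} K_k$ by $-\lp_\vx{x} K_k = -(d_\vx{x} d^*_\vx{x} + d^*_\vx{x} d_\vx{x}) K_k$, and apply Lemma~\ref{lem:b_and_e}: part~\ref{lemitem:b_and_e} in degree $k$ gives $d^*_\vx{x} d_\vx{x} K_k = d^*_\vx{x} d^*_\vx{y} K_{k+1}$, and part~\ref{lemitem:b_and_e_sym} in degree $k-1$ gives $d_\vx{x} d^*_\vx{x} K_k = d_\vx{x} d_\vx{y} K_{k-1}$. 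Substituting,
$$K_k(\vx{x},\vx{y},t) = C + \int_t^\infty \left( d_\vx{x} d_\vx{y} K_{k-1}(\vx{x},\vx{y},\tau) + d^*_\vx{x} d^*_\vx{y} K_{k+1}(\vx{x},\vx{y},\tau) \right) \dx{\tau}{}.$$

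It remains to move the fixed-point operators $d_\vx{x} d_\vx{y}$ and $d^*_\vx{x} d^*_\vx{y}$ through the $\tau$-integral, which is the point requiring care and the main obstacle. The key observation is that the limit $C$ is moreover a harmonic double-form: $\lp_\vx{x} C = \lim_{\tau \to \infty} \lp_\vx{x} K_k = -\lim_{\tau \to \infty} \di_\tau K_k = 0$, and similarly in $\vx{y}$ by symmetry of the kernel, so $d$ and $d^*$ annihilate $C$ in both variables. Writing $C_{k \pm 1}$ for the (harmonic) large-time limits in the neighbouring degrees, one has $d_\vx{x} d_\vx{y} K_{k-1} = d_\vx{x} d_\vx{y}(K_{k-1} - C_{k-1})$ and $d^*_\vx{x} d^*_\vx{y} K_{k+1} = d^*_\vx{x} d^*_\vx{y}(K_{k+1} - C_{k+1})$; since $K_{k\pm1} - C_{k\pm1}$ decay exponentially, the tails $\int_t^\infty (K_{k\pm1} - C_{k\pm1})\,\dx{\tau}{}$ converge and differentiation under the integral sign is legitimate. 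Reading the result with the constants understood to drop out under the derivatives gives $K_k = C + d_\vx{x} d_\vx{y} \int_t^\infty K_{k-1}\,\dx{\tau}{} + d^*_\vx{x} d^*_\vx{y} \int_t^\infty K_{k+1}\,\dx{\tau}{}$, as claimed. One could alternatively bypass the spectral decomposition by taking, as the hypothesis implicitly demands, $K_j - C_j$ to be $\tau$-integrable for $j = k-1,k,k+1$.
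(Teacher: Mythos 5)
Your proof is correct and follows essentially the same route as the paper's: the fundamental theorem of calculus with large-time limit $C$, substitution of $-\lp_\vx{x}$ for $\di_\tau$ via the heat equation, and Lemma \ref{lem:b_and_e} to convert $d_\vx{x}d^*_\vx{x}K_k$ and $d^*_\vx{x}d_\vx{x}K_k$ into the neighbouring degrees. Your further observation --- that $\int_t^\infty K_{k\pm1}\,\dx{\tau}{}$ itself diverges when the limits $C_{k\pm1}$ are nonzero, so that the operators must either stay inside the integral or $K_{k\pm1}$ be replaced by $K_{k\pm1}-C_{k\pm1}$ --- is a legitimate point of care that the paper's own proof (which leaves the operators inside the integral and never moves them out) passes over silently.
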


		\begin{proof}
			Let $K_k$ be the $k$-form heat kernel.  Clearly,
				$$\ds K_k(\vx{x}, \vx{y}, t) = C - \int_t^\infty \frac{\di}{\di \tau} K_k(\vx{x}, \vx{y}, \tau) \dx{\tau}{},$$
			since $K_k$ tends to $C$ as $t$ increases.  Since $K_k$ is a solution of the heat equation, we can replace the time derivative with $-\lp_\vx{x} = - d_\vx{x} d^*_\vx{x} - d^*_\vx{x}d_\vx{x}$, so
				$$K_k(\vx{x}, \vx{y}, t) = C + \int_t^\infty (d_\vx{x} d^*_\vx{x} + d^*_\vx{x}d_\vx{x}) K_k(\vx{x}, \vx{y}, \tau) \dx{\tau}{}.$$
			Using Lemma \ref{lem:b_and_e}, we can rewrite the above as
				$$K_k(\vx{x}, \vx{y}, t) = C + \int_t^\infty d_\vx{x} d_\vx{y}K_{k-1}(\vx{x}, \vx{y}, \tau) + d^*_\vx{x} d^*_\vx{y} K_{k+1}(\vx{x}, \vx{y}, \tau) \dx{\tau}{}.$$
		\end{proof}

		I would like to thank Dr. Kucerovsky for his helpful comments of earlier versions of this paper.
	\bibliography{articles}
	\bibliographystyle{alpha}
\end{document}